\documentclass[12pt,reqno]{article}

\usepackage[usenames]{color}
\usepackage{enumerate}
\usepackage{amsmath}
\usepackage{amsthm}
\usepackage{amsfonts}

\usepackage[colorlinks=true,
linkcolor=webgreen,
filecolor=webbrown,
citecolor=webgreen]{hyperref}

\definecolor{webgreen}{rgb}{0,.5,0}
\definecolor{webbrown}{rgb}{.6,0,0}

\setlength{\textwidth}{6.5in}
\setlength{\oddsidemargin}{.1in}
\setlength{\evensidemargin}{.1in}
\setlength{\topmargin}{-.1in}
\setlength{\textheight}{8.4in}

\newcommand{\seqnum}[1]{\href{http://oeis.org/#1}{\underline{#1}}}

\newcommand{\ffloor}[1]{\left\lfloor #1 \right\rfloor}

\begin{document}

\theoremstyle{plain}
\newtheorem{theorem}{Theorem}
\newtheorem{corollary}[theorem]{Corollary}
\newtheorem{lemma}[theorem]{Lemma}
\newtheorem{proposition}[theorem]{Proposition}

\theoremstyle{definition}
\newtheorem{definition}[theorem]{Definition}
\newtheorem{example}[theorem]{Example}
\newtheorem{conjecture}[theorem]{Conjecture}
\newtheorem{problem}[theorem]{Problem}
\newtheorem{algorithm}[theorem]{Algorithm}

\theoremstyle{remark}
\newtheorem{remark}[theorem]{Remark}
\newtheorem{notation}[theorem]{Notation}

\newcommand{\ZZ}{{\mathbb Z}}

\begin{center}
\vskip 1cm{\LARGE\bf  A sequence of quasipolynomials arising from random numerical semigroups\\
\vskip 1cm}
\large
Calvin Leng\\
Department of Mathematics\\
University of California Davis\\
Davis, CA 95616\\
USA\\
\href{mailto:calleng@ucdavis.edu}{\tt calleng@ucdavis.edu} \\
\ \\
Christopher O'Neill\\
Department of Mathematics and Statistics\\
San Diego State University\\
San Diego, CA 92182\\
USA\\
\href{mailto:cdoneill@sdsu.edu}{\tt cdoneill@sdsu.edu}\\
\end{center}

\vskip .2 in

\begin{abstract}
A numerical semigroup is a subset of the non-negative integers that is closed under addition.  For a randomly generated numerical semigroup, the expected number of minimum generators can be expressed in terms of a doubly-indexed sequence of integers, denoted $h_{n, i}$, that count generating sets with certain properties.  We prove a recurrence that implies the sequence $h_{n,i}$ is eventually quasipolynomial when the second parameter is fixed.
\end{abstract}

\section{Introduction}
\label{sec:intro}

A \emph{numerical semigroup} is a subset of $\ZZ_{\geq 0}$ containing 0 that is closed under addition.  The \emph{numerical semigroup generated by a set} $A = \{a_1, a_2, \dots, a_k\}$ is the smallest numerical semigroup containing $A$, namely
\begin{align*}
S = \langle A \rangle = \langle a_1, \ldots, a_k \rangle = \{a_1 x_1 + \dots + a_k x_k : x_i \in \ZZ_{\geq 0}\}.
\end{align*}
A generating set $A$ is \emph{minimal} if for all $x \in A$, we have $\langle A \rangle \neq \langle A \setminus \{x\} \rangle$.  Every numerical semigroup $S$ has a unique minimal generating set, and the \emph{embedding dimension} of~$S$ is the size of its minimal generating set (see~\cite{numerical} for a thorough introduction). 

The authors of \cite{rnscomplex} introduce a model of randomly generating a numerical semigroup that is similar to the Erd\H os-Renyi model for random graphs.  Their model takes two inputs $M \in \ZZ_{\ge 1}$ and $p \in [0, 1]$, and randomly selects a generating set $A$ that includes each integer $n = 1, 2, \ldots, M$ with independent probability $p$.  

As an example, if $M = 40$ and $p = 0.1$, then one possible set is $A = \{6,9,18,20,32\}$ (this~is not unreasonable, as on average one would expect 4 generators to be selected).  However, only 3 elements of $A$ are minimal generators, since $18 = 9 + 9$ and $32 = 20 + 6 + 6$.  As such, the resulting semigroup $S = \langle A \rangle = \langle 6, 9, 20 \rangle$ has embedding dimension 3.  

One of the main results in \cite{rnscomplex} is that the expected number of minimal generators of a numerical semigroup $S$ sampled with the above model can be expressed as
\begin{align*}
\mathbb E [e(S)] = \sum_{n=1}^M p(1 - p)^{\ffloor{n/2}} \left(h_{n,0} + h_{n,1} p + h_{n,2} p^2 + \cdots\right),
\end{align*}
where $h_{n, i}$ equals the number of sets $A \subset [1, n/2) \cap \ZZ$ with $|A| = i$ that minimally generate a numerical semigroup not containing $n$.  Of interest is the asymptotic behavior of $\mathbb E [e(S)]$ for fixed $p$ as $M \to \infty$.  Although this is currently out of reach, $\mathbb E [e(S)]$ can be approximated for fixed $M$ using the above formula, so long as $h_{n,i}$ is known for $n \le M$.  

\begin{figure}[t]
\begin{center}
\begin{tabular}{r@{\,\,\,\,\,\,}r@{\,\,\,}r@{\,\,\,}r@{\,\,\,}r@{\,\,\,}r@{\,\,\,}r@{\,\,\,}r@{\,\,\,}r@{\,\,\,}r@{\,\,\,}r@{\,\,\,}r@{\,\,\,}r@{\,\,\,}r@{\,\,\,}r@{\,\,\,}l}
$n$ = 68:&  1, & 29, & 249, & 888, & 1705, & 2014, & 1599, & 888, & 347, & 91, & 14, & 1\phantom{,} \\

$n$ = 69:&  1, & 31, & 301, & 1181, & 2414, & 2939, & 2365, & 1335, & 535, & 147, & 25, & 2\phantom{,} \\

$n$ = 70:&  1, & 28, & 248, & 1012, & 2218, & 2873, & 2431, & 1414, & 569, & 155, & 26, & 2\phantom{,} \\

$n$ = 71:&  1, & 34, & 359, & 1577, & 3615, & 4945, & 4481, & 2878, & 1348, & 453, & 105, & 15, & 1\phantom{,} \\

$n$ = 72:&  1, & 25, & 222, & 893, & 1923, & 2498, & 2138, & 1267, & 526, & 147, & 25, & 2\phantom{,} \\

$n$ = 73:&  1, & 35, & 383, & 1764, & 4252, & 6139, & 5883, & 4008, & 2004, & 725, & 181, & 28, & 2\phantom{,} \\

$n$ = 74:&  1, & 34, & 337, & 1456, & 3361, & 4694, & 4365, & 2853, & 1345, & 453, & 105, & 15, & 1\phantom{,} \\

$n$ = 75:&  1, & 32, & 346, & 1582, & 3810, & 5567, & 5428, & 3758, & 1888, & 684, & 172, & 27, & 2\phantom{,} \\

$n$ = 76:&  1, & 33, & 334, & 1448, & 3413, & 5005, & 4992, & 3559, & 1863, & 705, & 181, & 28, & 2\phantom{,}
\end{tabular}
\end{center}
\caption{Values of $h_{n,i}$ for $n = 68$ through $n = 76$.}
\label{f:sequence}
\end{figure}

The doubly-indexed sequence $h_{n,i}$ is available on OEIS as \seqnum{A319608}, computed for $n \le 90$.  Figure~\ref{f:sequence} contains the values of $h_{n,i}$ for $n = 68, \ldots, 76$, where each row is comprised of $h_{n,0}, h_{n,1}, \ldots, h_{n,d_n}$ from left to right.   The following facts about the sequence $h_{n,i}$ are known:
\begin{itemize}
\item $h_{n,i}$ is nonzero for $n \ge 1$ and $0 \le i \le d_n = \ffloor{n/2} - \ffloor{n/3}$;
\item $h_{n,0} = 1$;
\item $h_{n,1} = \ffloor{(n + 1) / 2} - \tau(n)$, where $\tau(n)$ denotes the number of divisors of $n$; and
\item The sum of the $n^\text{th}$ row equals the number of irreducible numerical semigroups with Frobenius number $n$ \cite{irrnumsgpscount,fundamentalgaps}, which appears in OEIS as \seqnum{A158206} \cite{oeis}. 
\end{itemize}

Currently, computing the values of $h_{n, i}$ for large $n$ is time-intensive; the fastest known algorithm computes the $n^\text{th}$ row by first computing the set of irreducible numerical semigroups with Frobenius number $n$ and utilizing the last bullet point above~\cite{fundamentalgaps}.  This computation takes 3 days for $n = 89$ on the authors' machines.  The more values of $h_{n, i}$ that are known, the more accurately $\mathbb E [e(S)]$ can be approximated.  Due to the limited known values of $h_{n,i}$, appoximations computed with the currently known values still differ drastically from those obtained from experimental data.  

In this paper, we examine the combinatorics of the sequence $h_{n,i}$.  Our main result is Corollary~\ref{c:mainthm}, which states that for fixed $k$ the sequence $h_{n, d_n - k}$ coincides with a polynomial in $n \gg 0$ whose coefficients are 6-periodic functions of $n$, follows from the following recurrence.  

\begin{theorem}\label{t:mainthm}
Fix $k \in \ZZ_{\ge 0}$, $b \in \{0, 1, 2\}$, and $m > 24k + 12 - 8b$ with $m \equiv b \bmod 3$.  The recurrence
$$h_{n, d_n - k} = \sum_{l = 0}^k h_{m, d_m - l} \binom{d_n - d_m}{k - l}$$
holds for all $n \geq m$ satisfying $n \equiv b \bmod 3$.
\end{theorem}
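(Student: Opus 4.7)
The plan is to count $h_{n, d_n - k}$ by decomposing each valid generating set into a ``small'' and a ``large'' part, then to set up a bijection between valid sets for $n$ and pairs consisting of a valid set for $m$ together with a choice of additional large generators.

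Let $\mathcal L_n = (n/3, n/2) \cap \ZZ$; this interval has exactly $d_n$ elements, and every subset of $\mathcal L_n$ automatically minimally generates a semigroup not containing $n$, since any sum of two elements of $\mathcal L_n$ lies in $(2n/3, n)$ and any sum of three or more exceeds $n$. For a valid $A$ of size $d_n - k$, I would write $A = S \sqcup L$ with $S = A \cap [1, n/3]$ and $L = A \cap \mathcal L_n$, so that the complement $\mathcal L_n \setminus L$ (the ``holes'' of $L$) has exactly $k + |S|$ elements.

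The crux is the following structural lemma, which I would establish first: once $m > 24k + 12 - 8b$ and $n \geq m$ with $n \equiv b \pmod 3$, no valid $A$ of size $d_n - k$ contains any element in the ``medium'' range $(m/3, n/3]$, so $S \subset [1, m/3]$. A medium-range element $s$ would force many holes in $\mathcal L_n$: multiples $2s, 3s, \ldots$ may lie in $\mathcal L_n$ (forbidden by minimality), sums $s + s'$ with other small elements of $A$ are forbidden, and the Frobenius condition $n \notin \langle A \rangle$ eliminates pairs $\{l_1, l_2\} \subset L$ with $l_1 + l_2 = n - \sigma$ for various $\sigma \in \langle S \rangle$. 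A careful count should show that the total number of forced holes exceeds the budget $k + |S|$ unless the stated threshold is met, producing the bound $24k + 12 - 8b$.

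Given the structural lemma, the small part $S$ is confined to a fixed finite region. The plan is then to match the bottom $d_m$ positions of $\mathcal L_n$ with $\mathcal L_m$ via a natural shift (integer because $n \equiv m \pmod 3$), and to show that the forbidden positions in $\mathcal L_n$ induced by $S$ all lie in this bottom portion, while the top $d_n - d_m$ positions of $\mathcal L_n$ are fully unconstrained (no sum $\sigma \in \langle S \rangle$ or $n - \sigma$ reaches them). Under this correspondence, a valid $A_n$ of size $d_n - k$ is encoded by a valid $A_m$ of size $d_m - l$ (obtained from $S$ together with the shifted image of $L \cap \text{bottom}$) paired with a choice of which $(k - l)$ of the top $d_n - d_m$ positions of $\mathcal L_n$ are left as holes. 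Summing over $l \in \{0, 1, \ldots, k\}$ then yields the recurrence.

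The main obstacle is the structural lemma: carefully enumerating the forbidden positions forced by medium-range elements and pinning down the sharp threshold $24k + 12 - 8b$. A secondary difficulty is verifying the bijection, specifically that the shift identifying $\mathcal L_m$ with the bottom of $\mathcal L_n$ preserves both minimality and the Frobenius condition; the hypothesis $n \equiv m \pmod 3$ is what makes the shift integer-valued and keeps the relevant sums aligned modulo~$3$.
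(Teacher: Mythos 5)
Your structural lemma --- the crux of the proposal --- points in the wrong direction. You claim that no valid $A$ of size $d_n - k$ contains any element in the medium range $(m/3, n/3]$, so that $S \subset [1, m/3]$. The paper establishes essentially the opposite: for $n$ past the threshold, every working set of size $d_n - k$ is \emph{strongly $n$-bounded}, meaning $A \subset (n/4, n/2)$, and (Theorem~\ref{t:insertionlowerbound}) the sub-$n/2$ elements outside $X_n$ lie in the short window $\bigl[\lfloor n/3\rfloor + b_n - 2k - 1,\ \lfloor n/3\rfloor\bigr]$. That window sits \emph{just below} $n/3$; it is entirely inside your forbidden medium range once $n$ is moderately larger than $m$ (say $n \geq 4m/3$), at which point your lemma would force $S = \emptyset$ for every working set --- contradicting the known working sets with insertions. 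The intuition is reversed: it is elements far below $n/3$ (near $n/4$) that generate many forced holes via doubles, triples, and Frobenius sums, so those are excluded; elements hugging $n/3$ cost only $O(1)$ holes and survive. Your proposed bijection inherits this error: keeping $S$ in absolute position while shifting $L$ down to $\mathcal L_m$ gives a set whose ``small'' elements are near $n/3 \geq m/2$ once $n \geq 3m/2$, violating the requirement that a working set for $m$ live below $m/2$.

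The fix is exactly the paper's device: normalize by the offset $A \mapsto A - \lfloor n/3 \rfloor$, under which the insertions occupy a window $\{b - 2k - 1, \dots, 0\}$ depending only on $k$ and $b$ (not on $n$), and the compatibility conditions of Theorem~\ref{t:classification} are phrased purely in offset coordinates, hence transfer verbatim from $n$ to $m$. The bijection then splits the removal set into offsets $\leq d_m$ and offsets in $\{d_m+1, \dots, d_n\}$, un-offsets the first block together with the insertions back to index $m$, and records the second block as the $\binom{d_n - d_m}{k-l}$ choice. You already had the right instinct about shifting $L$ (and about $n \equiv m \bmod 3$ making the shift integer-valued); the missing step is that $S$ must be shifted by the same amount, which is precisely what the offset-form normalization accomplishes.
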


A \emph{quasipolynomial} is a function $q:\ZZ \to \ZZ$ such that
\begin{align*}
q(x) = c_0(x) + c_1(x) x + c_2(x) x^2 + \dots + c_d(x) x^d
\end{align*}
where each $c_i(x)$ is a periodic function.  The \emph{degree} of $q$, denoted $\deg q$, is the largest integer $d$ for which $c_d$ is not identically 0, and the \emph{period} of $q$ is the smallest integer $p$ such that $c_i(x + p) = c_i(x)$ for every $x$ and $i$.  

\begin{corollary}\label{c:mainthm}
For fixed $k$, the function $n \mapsto h_{n, d_n - k}$ coincides with a quasipolynomial 
$$c_k(n)n^k + \cdots + c_1(n)n + c_0(n)$$
with degree $k$, period 6, and leading coefficient
\begin{align*}
c_k(n) =
\begin{cases}
\frac{2}{k!6^k}, & \text{if $n \equiv 0, 1 \bmod 3$;}\\
\frac{1}{k!6^k}, & \text{if $n \equiv 2 \bmod 3$,}
\end{cases}
\end{align*}
for all $n > 24k + 12 - 8b$, where $b \in \{0, 1, 2\}$ with $n \equiv b \bmod 3$.  
\end{corollary}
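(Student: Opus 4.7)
Fix $k \in \ZZ_{\ge 0}$ and $b \in \{0,1,2\}$, and let $m = m_b$ denote the least integer with $m > 24k + 12 - 8b$ and $m \equiv b \pmod 3$.  Theorem~\ref{t:mainthm} then provides, for every $n \ge m$ with $n \equiv b \pmod 3$, the identity
\[
h_{n, d_n - k} = \sum_{l=0}^k h_{m, d_m - l} \binom{d_n - d_m}{k-l}.
\]
The plan is to read the quasipolynomial assertion off from this identity by analysing how $d_n$ depends on $n$ and then extracting the leading behaviour.

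First I would record that $n \mapsto d_n = \ffloor{n/2} - \ffloor{n/3}$ is quasilinear in $n$ with leading coefficient $1/6$ and period 6.  Consequently each binomial $\binom{d_n - d_m}{k-l}$ is a polynomial of degree $k-l$ in the quasilinear quantity $d_n - d_m$, and so is a quasipolynomial in $n$ of degree $k-l$ whose period divides 6.  Since the $h_{m, d_m - l}$ are constants depending only on $m$ and $l$, the right-hand side is a quasipolynomial in $n$ of degree at most $k$ with period dividing 6.  Varying $b$ over $\{0,1,2\}$ and glueing the three residue classes together yields the desired global quasipolynomial valid for $n > 24k + 12 - 8b$.

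To pin down the degree and leading coefficient, note that only the $l=0$ summand contributes in degree $k$; it equals
\[
h_{m, d_m}\binom{d_n - d_m}{k} = \frac{h_{m,d_m}}{k!}(d_n - d_m)^k + O(n^{k-1}),
\]
and substituting $d_n = n/6 + O(1)$ gives leading coefficient $h_{m, d_m}/(k!\,6^k)$.  To evaluate $h_{m, d_m}$, I would apply Theorem~\ref{t:mainthm} with $k = 0$: the sum collapses to $h_{n, d_n} = h_{m, d_m}$, so $h_{n, d_n}$ is constant on each residue class modulo 3 (once $n > 12 - 8b$).  Direct inspection of the small end of Figure~\ref{f:sequence}---for instance $h_{68,11} = 1$ and $h_{69,11} = h_{70,11} = 2$---identifies this constant as $2$ when $b \in \{0,1\}$ and $1$ when $b = 2$, producing the stated formula for $c_k(n)$ and in particular showing that the degree is exactly $k$.

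The main obstacle is the evaluation of $h_{m, d_m}$ itself.  An entirely self-contained proof would classify the maximum-cardinality minimal generating sets $A \subset [1, n/2) \cap \ZZ$ whose numerical semigroup avoids $n$: the interval $(n/3, n/2) \cap \ZZ$ always provides one such set, and a second arises via a boundary adjustment precisely when $n \equiv 0, 1 \pmod 3$, while the case $n \equiv 2 \pmod 3$ is rigid.  Once this combinatorial input is in place, everything else follows from Theorem~\ref{t:mainthm} together with the elementary structure of $d_n$.  Finally, that the period is exactly $6$ rather than smaller follows because $c_k(n)$ already depends on $n \bmod 3$ (ruling out periods $1$ and $2$) while the parity-dependence of $d_n$ within each residue class mod 3 forces some lower-order coefficient to depend on $n \bmod 2$ for $k \ge 1$, so no proper divisor of $6$ can be a period.
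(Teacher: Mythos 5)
Your proposal is correct and follows essentially the same approach as the paper: reduce via Theorem~\ref{t:mainthm}, invoke quasilinearity of $d_n$ (leading coefficient $1/6$, period~$6$), and extract the leading coefficient from the $l=0$ summand $h_{m,d_m}\binom{d_n-d_m}{k}$. Your one small refinement --- pinning down $h_{m,d_m}$ via the $k=0$ case of Theorem~\ref{t:mainthm} together with the raw data in Figure~\ref{f:sequence} --- is arguably tidier than the paper's appeal to Figure~\ref{f:quasipolynomials} (which tabulates instances of the Corollary itself), but the underlying argument is the same.
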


In the development of the proof of Theorem \ref{t:mainthm}, we obtain an algorithm for computing the values $h_{n, i}$ appearing in Corollary~\ref{c:mainthm} (Algorithm~\ref{a:hnkalgorithm}).  Our algorithm has obtained $h_{n, i}$ values that were previously unobtained.  With the improved algorithm and Theorem~\ref{t:mainthm}, explicit quasipolynomials have been provided for $h_{n, d_n - k}$ for each $k \le 7$ (see Figure~\ref{f:quasipolynomials} for the quasipolynomials up to $k = 4$).  Computing the quasipolynomial coefficients of $h_{n, d_n - 7}$ requires computing the value of e.g., $h_{183, d_{183} - 7} = h_{183, 23} = 6423209$, a task that would have been impossible with existing methods.

\begin{figure}
\centering
\begin{align*}
h_{n,d_n} &=
\begin{cases}
2, & \text{if $n \equiv 0 \bmod 6$ and $n \geq 18$;} \\
2, & \text{if $n \equiv 1 \bmod 6$ and $n \geq 7  $;} \\
1, & \text{if $n \equiv 2 \bmod 6$ and $n \geq 2  $;} \\
2, & \text{if $n \equiv 3 \bmod 6$ and $n \geq 15 $;} \\
2, & \text{if $n \equiv 4 \bmod 6$ and $n \geq 10 $;} \\
1, & \text{if $n \equiv 5 \bmod 6$ and $n \geq 5  $.}
\end{cases}\\
h_{n,d_n-1} &=
\begin{cases}
\frac{1}{3}(n + 3),  & \text{if $n \equiv 0 \bmod 6$ and $n \geq 42$;} \\
\frac{1}{3}(n + 11), & \text{if $n \equiv 1 \bmod 6$ and $n \geq 31$;} \\
\frac{1}{6}(n + 16), & \text{if $n \equiv 2 \bmod 6$ and $n \geq 26$;} \\
\frac{1}{3}(n + 6),  & \text{if $n \equiv 3 \bmod 6$ and $n \geq 39$;} \\
\frac{1}{3}(n + 8),  & \text{if $n \equiv 4 \bmod 6$ and $n \geq 34$;} \\
\frac{1}{6}(n + 19), & \text{if $n \equiv 5 \bmod 6$ and $n \geq 23$.}
\end{cases}\\
h_{n, d_n-2} &=
\begin{cases}
\frac{1}{36}(n^2 + 108),       & \text{if $n \equiv 0 \bmod 6$ and $n \geq 66$;} \\
\frac{1}{36}(n^2 + 16n + 19),  & \text{if $n \equiv 1 \bmod 6$ and $n \geq 55$;} \\
\frac{1}{72}(n^2 + 26n + 160), & \text{if $n \equiv 2 \bmod 6$ and $n \geq 50$;} \\
\frac{1}{36}(n^2 + 6n + 117),  & \text{if $n \equiv 3 \bmod 6$ and $n \geq 63$;} \\
\frac{1}{36}(n^2 + 10n - 20),  & \text{if $n \equiv 4 \bmod 6$ and $n \geq 58$;} \\
\frac{1}{72}(n^2 + 32n + 247), & \text{if $n \equiv 5 \bmod 6$ and $n \geq 47$.}
\end{cases}\\
h_{n, d_n - 3} &=
\begin{cases}
\frac{1}{648}(n^3 - 9n^2 + 342n - 3240),   & \text{if $n \equiv 0 \bmod 6$ and $n \geq 90$;} \\
\frac{1}{648}(n^3 + 15n^2 - 69n + 5885),   & \text{if $n \equiv 1 \bmod 6$ and $n \geq 79$;} \\
\frac{1}{1296}(n^3 + 30n^2 + 264n - 1952), & \text{if $n \equiv 2 \bmod 6$ and $n \geq 74$;} \\
\frac{1}{648}(n^3 + 315n - 2268),          & \text{if $n \equiv 3 \bmod 6$ and $n \geq 87$;} \\
\frac{1}{648}(n^3 + 6n^2 - 132n + 6200),   & \text{if $n \equiv 4 \bmod 6$ and $n \geq 82$;} \\
\frac{1}{1296}(n^3 + 39n^2 + 471n - 863),  & \text{if $n \equiv 5 \bmod 6$ and $n \geq 71$.}
\end{cases}\\
h_{n, d_n - 4} &=
\begin{cases}
\frac{1}{15552}(n^4 - 24n^3 + 828n^2 - 17280n + 419904),   & \text{if $n \equiv 0 \bmod 6$ and $n \geq 114$;} \\
\frac{1}{15552}(n^4 + 8n^3 - 282n^2 + 24728n + 413225),    & \text{if $n \equiv 1 \bmod 6$ and $n \geq 103$;} \\
\frac{1}{31104}(n^4 + 28n^3 + 204 n^2  - 10256n + 454912), & \text{if $n \equiv 2 \bmod 6$ and $n \geq 98 $;} \\
\frac{1}{15552}(n^4 - 12n^3 + 666n^2 - 12852n + 374949),   & \text{if $n \equiv 3 \bmod 6$ and $n \geq 111$;} \\
\frac{1}{15552}(n^4 -4n^3 - 300n^2 + 26528n - 490112),     & \text{if $n \equiv 4 \bmod 6$ and $n \geq 106$;} \\
\frac{1}{31104}(n^4 + 40n^3 + 510 n^2  - 8168n + 426817),  & \text{if $n \equiv 5 \bmod 6$ and $n \geq 95$.}
\end{cases}
\end{align*}
\caption{Quasipolynomial expressions for $h_{n, d_n - k}$ with $k = 0, 1, \ldots, 4$.}
\label{f:quasipolynomials}
\end{figure}

\section{Setup}
\label{sec:setup}

Unless otherwise stated, throughout the rest of the paper assume $n \in \ZZ_{\ge 1}$ and $b_n \in \{0, 1, 2\}$ with $n \equiv b_n \bmod 3$.  Let
$$X_n = \left(\tfrac{n}{3} , \tfrac{n}{2} \right) \cap \ZZ.$$

\begin{definition}\label{d:works}
Fix a set $A \subset \ZZ_{\ge 1}$.  We say $A$ \emph{works} for $n \in \ZZ_{\ge 1}$ if
\begin{enumerate}[(i)]
\item $n \notin \langle A \rangle$,
\item $x < n/2$ for all $x \in A$, and
\item $A$ minimally generates a numerical semigroup.
\end{enumerate}
In particular, $h_{n,i}$ equals the number of sets $A$ with $|A| = i$ that work for $n$.  
\end{definition}

To motivate the next several definitions, recall that for $n \ge 13$, 
\begin{align}\label{e:122}
h_{n, d_n} =
\begin{cases}
2, & \text{if $n \equiv 0, 1 \bmod 3$;}\\
1, & \text{if $n \equiv 2 \bmod 3$.}
\end{cases}
\end{align}
The set $X_n$ works for $n$ and $|X_n| = d_n$.  Let $E_{0, n}$ and $E_{1,n}$ denote the remaining working sets for $n$ of size $d_n$ when $b_n = 0$ and $b_n = 1$, respectively.  The key observation is that for all $n$,
\begin{align*}
X_n - \ffloor{\tfrac{n}{3}} &= \{1, 2, \ldots, d_n\}, \\
E_{0,n} - \ffloor{\tfrac{n}{3}} &= \{-1, 1, 3, 4, \ldots, d_n\}, \text{ and} \\
E_{1,n} - \ffloor{\tfrac{n}{3}} &= \{0, 2, 3, \ldots, d_n\}.
\end{align*}
Since $X_n$ contains every integer in the interval $(n/3, n/2)$, if we wanted to construct counted sets from $X_n$, we could only adjoin elements from $\{1, 2, \dots, \ffloor{n/3}\}$ to $X_n$. Thus we thought of $\ffloor{n/3}$ as a sort of cutoff point. From this, it felt natural to express sets in terms of how offset the elements are from $\ffloor{n/3}$. This motivates the following.

\begin{definition}\label{d:offset}
The \emph{offset form} of a set $A = \{x_1, x_2, \dots, x_k\} \subset \ZZ_{\ge 1}$ is the set
\begin{align*}
A_{(n)} = A - \ffloor{n/3} = \{x_1 - \ffloor{n/3}, x_2 - \ffloor{n/3}, \dots, x_k - \ffloor{n/3}\}.
\end{align*}
\end{definition}

After expressing the sets we computed in offset form, we noticed that we could go one step further. We noticed that if we instead expressed sets in terms of how different they are from $X_n$ and then take the offset form of the result, the expressions would be equal. This motivates the following.

\begin{definition}\label{d:ripair}
A set $I \subseteq \ZZ$ is an \emph{inserting set} for $n \in \ZZ_{\ge 1}$ if
$$I_{(n)} \subseteq \{-\ffloor{n/3}, \dots, -1, 0\},$$
and a set $R \subseteq \ZZ$ is a \emph{removing set} for $n$ if 
$$R_{(n)} \subseteq \{1, 2, \dots, d_n\}.$$
An \emph{RI-pair} for $n$ is a pair $(R, I)$ of a removing set $R$ and an inserting set $I$.

There is a natural bijection between $RI$-pairs for $n$ and the powerset of $\{1, 2, \dots, d_n\}$ given by the map
\begin{align*}
\phi(R, I) = (X_n \setminus R) \cup I.
\end{align*}
The inverse map is given by
\begin{align*}
A \mapsto (X_n \setminus A, A \setminus X_n).
\end{align*}
Since $\varphi_n$ gives a bijection between the two objects, we say the set \emph{corresponding to} an $RI$-pair $(R,I)$ is the set $\varphi_n(R, I)$, and vice-versa. 
\end{definition}

Theorem~\ref{t:mainthm} follows from the fact that for fixed $k$ and large $n$, every $RI$-pair $(R,I)$ corresponding to a working set for $n$ of size $d_n - k$ satisfies $I_{(n)} \subseteq \{p_n(k), \ldots, -1, 0\}$, where 
$$p_n(k) = b_n - 2k - 1$$
only depends on $n$ modulo 3 (Theorem~\ref{t:insertionlowerbound}).  As a consequence, the restrictions on removal sets corresponding to a given insertion set are independent of the size of $n$ in this case.  

\begin{example}\label{e:ripair}
If $n = 11$ and $k = 1$, then $h_{n,d_n-k} = h_{11,1} = 4$ and $X_n = \{4,5\}$.  The sets $A$ with $|A| = 1$ that work for $11$ are 
\begin{center}
\begin{tabular}{l@{\qquad}l}
$A = \{2\} = (X_n \setminus \{4,5\}) \cup \{2\}$, &
$A = \{4\} = (X_n \setminus \{5\}) \cup \{\}$, \\
$A = \{3\} = (X_n \setminus \{4,5\}) \cup \{3\}$, and &
$A = \{5\} = (X_n \setminus \{4\}) \cup \{\}$.
\end{tabular}
\end{center}
Theorem~\ref{t:classification} classifies the possible $RI$-pairs that correspond to working sets for large $n$.  
\end{example}

\section{Strongly bounded sets}
\label{sec:stronglyboundedsets}

We begin by classifying the working sets for $n$ that are strongly $n$-bounded (Definition~\ref{d:stronglynbounded}).  As it turns out, for $k$ fixed and large $n$, every working set for $n$ with size $d_n - k$ is strongly $n$-bounded (Theorem~\ref{t:quasilowerbound}).  Note that any strongly $n$-bounded set automatically satisfies parts~(ii) and~(iii) of Definition~\ref{d:works}.  

\begin{definition}\label{d:stronglynbounded}
We say a set $A \subset \ZZ_{\ge 1}$ is \emph{strongly $n$-bounded} if $A \subset (n/4, n/2)$.  
\end{definition}

\begin{proposition}\label{p:stronglynbounded}
A strongly $n$-bounded set $A$ works for $n$ if and only if $b_n \notin 3A_{(n)}$.  
\end{proposition}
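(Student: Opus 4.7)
The plan is to reduce the three conditions in Definition~\ref{d:works} to a single numerical condition on $A_{(n)}$. First I would observe that strong $n$-boundedness already hands us conditions (ii) and (iii) for free. Condition (ii) is immediate since $A \subset (n/4, n/2)$. For (iii), suppose some $a \in A$ satisfies $a = \sum_{b \in A \setminus \{a\}} c_b\, b$ with $c_b \in \ZZ_{\ge 0}$ not all zero. If $\sum c_b \geq 2$, then the right-hand side exceeds $2 \cdot (n/4) = n/2 > a$, a contradiction; and if $\sum c_b = 1$, then $a$ equals some other element of the set $A$, also impossible. Hence $A$ automatically minimally generates a numerical semigroup.

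The real content is therefore condition (i): $n \notin \langle A \rangle$. Suppose $n = \sum_{a \in A} c_a\, a$ with $c_a \in \ZZ_{\ge 0}$, and set $k = \sum c_a$. Then
\[
k \cdot \tfrac{n}{4} < \sum c_a\, a = n < k \cdot \tfrac{n}{2},
\]
which forces $2 < k < 4$, i.e., $k = 3$. So $n \in \langle A \rangle$ happens precisely when $n$ can be written as a sum of exactly three elements of $A$, counted with multiplicity.

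Now I would translate this into offset form. Writing $a_i = x_i + \ffloor{n/3}$ for $x_i \in A_{(n)}$ and $n = 3\ffloor{n/3} + b_n$, the equation $a_1 + a_2 + a_3 = n$ becomes $x_1 + x_2 + x_3 = b_n$. Interpreting $3A_{(n)}$ as the threefold sumset $A_{(n)} + A_{(n)} + A_{(n)}$, this says $b_n \in 3A_{(n)}$. Thus $n \in \langle A \rangle$ if and only if $b_n \in 3A_{(n)}$, which combined with the automatic satisfaction of (ii) and (iii) yields the claim.

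I do not anticipate any serious obstacle: the argument is a direct size estimate pinning down the number of summands, followed by a change of variables. The only point requiring care is ruling out the $k = 1$ and $k = 2$ cases crisply (using that $(n/4, n/2)$ is an open interval so the inequalities are strict) and being explicit that $3A_{(n)}$ refers to the sumset rather than the dilation $\{3x : x \in A_{(n)}\}$.
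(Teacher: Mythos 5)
Your proof is correct and follows essentially the same route as the paper's: strong $n$-boundedness disposes of conditions (ii) and (iii) by the observation that any sum of two or more elements of $A$ exceeds $n/2$, and then a size estimate pins down the number of summands in any representation of $n$ to exactly three, which translates to $b_n \in 3A_{(n)}$ after the shift by $\ffloor{n/3}$. The paper phrases the summand count a bit differently, noting $x < n < y$ for $x \in 2A$ and $y \in 4A$, but this is the same estimate; your version is merely more explicit about the $k=1,2$ cases and about $3A_{(n)}$ being a sumset.
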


\begin{proof}
Any strongly $n$-bounded set automatically satisfies part~(ii) and~(iii) of Definition~\ref{d:works} since $x + y > \tfrac{n}{2} > z$ for any $x, y, z \in A$.  As such, $A$ works for $n$ if and only if $n \notin \langle A \rangle$.  Moreover, since $A$ is strongly $n$-bounded, we have $x < n < y$ for any $x \in 2A$ and $y \in 4A$, so $n \in \langle A \rangle$ if and only if $n \in 3A$.  The claim now follows from the fact that $n = 3 \ffloor{n/3} + b_n$.  
\end{proof}

\begin{definition}\label{d:removaldegree}
An $RI$-pair $(R,I)$ is \emph{compatible} for $n$ (or, equivalently, $R$ is \emph{compatible with}~$I$) if the corresponding set $A$ satisfies $b_n \notin 3A_{(n)}$.  The \emph{removal degree} of an inserting set $I$, denoted $r(I)$, is given by
\begin{align*}
r(I) = \min \{|R| : (R, I) \text{ is } \text{compatible}\}.
\end{align*}
and the \emph{removal degree} of an integer $\alpha \leq 0$ is given by $r(\alpha) = r(\{\alpha\})$.  
\end{definition}

\begin{remark}\label{r:removaldegree}
Note that Proposition~\ref{p:stronglynbounded} does \textbf{not} imply that an $RI$-pair $(R, I)$ compatible for~$n$ corresponds to a set $A$ that works for $n$, as $A$ need not be strongly $n$-bounded in general.  
\end{remark}

Theorem~\ref{t:classification} classifies the $RI$-pairs compatible for $n$ in terms of $I_{(n)}$ and $R_{(n)}$ by examining the different ways for three integers to sum to $b_n \in \{0, 1, 2\}$. 

\begin{theorem}\label{t:classification}
If $A$ is a set and $(R,I)$ is the corresponding $RI$-pair, then $b_n \notin 3A_{(n)}$ if and only if for all $\alpha \in I_{(n)}$, the following hold:
\begin{enumerate}[(i)]
\item\label{t:classification:form1}
$b_n - 2\alpha \in R_{(n)}$;

\item\label{t:classification:form2}
$(b_n - \alpha)/2 \in R_{(n)}$ if $\alpha \equiv b_n \bmod 2$;

\item\label{t:classification:form3}
$b_n - \alpha - \beta \in R_{(n)}$ for all $\beta \in I_{(n)}$ with $\beta \ne \alpha$; and

\item\label{t:classification:form4}
$y \in R_{(n)}$ or $b_n - \alpha - y \in R_{(n)}$ for all $y$ satisfying $1 \le y < b_n - \alpha - y$.

\end{enumerate}
\end{theorem}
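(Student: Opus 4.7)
The plan is to prove the biconditional by exhaustive case analysis on the ways of writing $b_n = a_1 + a_2 + a_3$ with each $a_i \in A_{(n)}$. I would first decompose $A_{(n)} = P \cup I_{(n)}$, where $P = (X_n)_{(n)} \setminus R_{(n)} \subseteq \{1, \ldots, d_n\}$ collects the positive surviving offsets and $I_{(n)} \subseteq \{-\lfloor n/3 \rfloor, \ldots, 0\}$ collects the non-positive ones. Since $b_n \le 2$ but any three elements of $P$ sum to at least $3$, every such representation must draw at least one summand from $I_{(n)}$, so I would classify representations by the number $k \in \{1, 2, 3\}$ of summands coming from $I_{(n)}$.

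Next I would align each subcase with one of the four conditions. For $k = 2$ with a repeated $\alpha \in I_{(n)}$, the third summand is $b_n - 2\alpha$, which condition~(i) excludes from $P$. For $k = 1$ with the two positive summands equal, each equals $(b_n - \alpha)/2$, possible only when $\alpha \equiv b_n \pmod{2}$; this is precisely what condition~(ii) negates. For $k = 2$ with distinct $\alpha, \beta \in I_{(n)}$, the third summand is $b_n - \alpha - \beta$, which condition~(iii) addresses. For $k = 1$ with distinct positive summands $y < z$, one has $z = b_n - \alpha - y$ with $1 \le y < (b_n - \alpha)/2$, and condition~(iv) asks that at least one of the pair be removed. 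The degenerate case $k = 3$ forces $b_n = 0$ with all three summands equal to $0 \in I_{(n)}$, which is caught by~(i) at $\alpha = 0$, since $0 \notin R_{(n)}$.

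With this dictionary in hand, the ``$\Leftarrow$'' direction is an exhaustive check: assuming all four conditions hold for every $\alpha \in I_{(n)}$, any putative representation of $b_n$ falls into one of the subcases above and is immediately blocked. For the ``$\Rightarrow$'' direction I would argue contrapositively: the failure of any single condition at some $\alpha$ hands over the summands of an explicit representation---$\alpha + \alpha + (b_n - 2\alpha)$ from~(i), $\alpha + (b_n - \alpha)/2 + (b_n - \alpha)/2$ from~(ii), $\alpha + \beta + (b_n - \alpha - \beta)$ from~(iii), and $\alpha + y + (b_n - \alpha - y)$ from~(iv). I expect the main obstacle to be the bookkeeping needed to verify that the subcases are mutually disjoint and jointly exhaustive: the parity constraint in~(ii) and the strict inequality $y < b_n - \alpha - y$ in~(iv) must cleanly separate the diagonal and off-diagonal subcases of $k = 1$, and the stipulation $\beta \ne \alpha$ in~(iii) must prevent overlap with the $k = 2$ subcase handled by~(i), while the shorthand ``$b_n - 2\alpha \in R_{(n)}$'' should be interpreted as ``$b_n - 2\alpha \notin P$'', so that values lying outside $\{1, \ldots, d_n\}$ are handled consistently throughout.
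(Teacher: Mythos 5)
Your proof takes essentially the same route as the paper's: both classify all ways of writing $b_n$ as a sum of three elements of $A_{(n)}$ and assign each configuration to one of conditions (i)--(iv), and your organization by the number $k$ of summands drawn from $I_{(n)}$ is simply a different slicing of the case analysis the paper performs by the cardinality of the summand set $S$ and the sign of the middle summand $y$. The only caveat worth recording is that your suggested reinterpretation of ``$\in R_{(n)}$'' as ``$\notin P$'' would make condition (i) hold vacuously when $\alpha=0$ and $b_n=0$ (since $0\notin P$ automatically), conflicting with your own treatment of the degenerate $k=3$ case where you correctly invoke $0\notin R_{(n)}$ to make (i) fail; the uniformly consistent reading across all edge cases is ``$\notin A_{(n)}$''.
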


\begin{proof}
If any of \eqref{t:classification:form1}-\eqref{t:classification:form4} is violated for some $\alpha \in I_{(n)}$, then it is easy to check that $b_n \in 3A_{(n)}$.  Conversely, suppose $b_n \in 3A_{(n)}$, meaning $\alpha + y + z = b_n$ for some $\alpha, y, z \in 3A_{(n)}$ with $\alpha \le y \le z$.   Since $b_n \in \{0, 1, 2\}$, we must have $\alpha \leq 0$ and thus $\alpha \in I_{(n)}$.  Let $S = \{\alpha, y, z\}$.  If every element of $S$ is nonpositive, then $\alpha = y = z = b_n = 0$ so \eqref{t:classification:form1} fails to hold and we are done.  As such, at most 2 elements of $S$ are nonpositive, so $z > 0$.  Similarly, if $|S| = 1$, then $\alpha = y = z = b_n = 0$ so \eqref{t:classification:form1} fails to hold and we are done.  This leaves four distinct cases:
\begin{itemize}
\item 
$|S| = 2$ and $y \leq 0$, in which case $y = \alpha$ and \eqref{t:classification:form1} fails to hold;

\item 
$|S| = 2$ and $y > 0$, in which case $y = z$ and \eqref{t:classification:form2} fails to hold;

\item 
$|S| = 3$ and $y \leq 0$, in which case $\alpha \neq y$ and \eqref{t:classification:form3} fails to hold; or

\item 
$|S| = 3$ and $y > 0$, in which case $y \neq z$ and \eqref{t:classification:form4} fails to hold.
\end{itemize}
This completes the proof.  
\end{proof}

\begin{remark}
Given an inserting set $I$, Theorem~\ref{t:classification} provides a systematic way to construct a removing set $R$ such that the set $A$ corresponding to $(R,I)$ satisfies $b_n \notin 3A_{(n)}$.  Most applications of Theorem~\ref{t:classification} will involve starting with a set $R = \emptyset$ and systematically putting elements into $R$; see Example~\ref{e:findingremovingset}.  Moreover, Theorem~\ref{t:classification} yields a better-than-brute-force method of computing $h_{n, d_n - k}$ for large $n$; see Algorithm~\ref{a:hnkalgorithm}.  
\end{remark}

\begin{lemma}\label{l:removalformula}
We have
\begin{align*}
r(\alpha) = 1 + \left\lceil \frac{b_n - \alpha - 1}{2} \right\rceil
\end{align*}
for any integer $\alpha \leq 0$.
\end{lemma}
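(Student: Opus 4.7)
The plan is to specialize Theorem~\ref{t:classification} to the singleton inserting set $I_{(n)} = \{\alpha\}$ and directly count the elements forced into $R_{(n)}$. Condition~(iii) becomes vacuous. Writing $c = b_n - \alpha \ge 0$, the remaining constraints on $R_{(n)}$ read: (i)~the element $c - \alpha$ lies in $R_{(n)}$; (ii)~the element $c/2$ lies in $R_{(n)}$ precisely when $\alpha \equiv b_n \bmod 2$, equivalently when $c$ is even; and (iv)~at least one element of each pair $\{y, c - y\}$ with $1 \le y < c/2$ lies in $R_{(n)}$.

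The crucial step is to verify that the singletons produced by~(i) and~(ii) are disjoint from each other and from every pair appearing in~(iv). Since $\alpha \le 0$, we have $c - \alpha \ge c$, so the element from~(i) strictly exceeds $c - 1$ and hence exceeds every element of every pair in~(iv), each of which lies in $\{1, \ldots, c - 1\}$. The element $c/2$ from~(ii) is the midpoint excluded by the strict inequality $y < c - y$ in~(iv), so it appears in no pair. Finally, the $\lfloor (c - 1)/2 \rfloor$ pairs in~(iv) are pairwise disjoint. Consequently any compatible $R$ must contain at least $1 + \lfloor (c - 1)/2 \rfloor$ distinct elements, with one additional forced element when $c$ is even; a split on the parity of $c$ collapses this lower bound to $1 + \lceil (c - 1)/2 \rceil$.

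To see the bound is attained, I would construct a compatible $R_{(n)}$ of exactly this size: include $c - \alpha$, adjoin $c/2$ if $c$ is even, and select (say) the larger representative $c - y$ from each pair in~(iv). By design this $R$ satisfies every condition of Theorem~\ref{t:classification}; since all chosen values lie in $\{1, \ldots, c - \alpha\}$, it is a legitimate removing set provided $d_n \ge c - \alpha$.

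The main obstacle is the disjointness bookkeeping between (i), (ii), and~(iv): were any of the forced elements to coincide, the naive count would overshoot. The argument relies crucially on the hypothesis $\alpha \le 0$ (which forces $c - \alpha \ge c$ and so separates the element from~(i) from all pair elements) and on the strict inequality $y < c - y$ in~(iv) (which keeps the midpoint $c/2$ of~(ii) outside every pair). Everything else is a routine parity calculation.
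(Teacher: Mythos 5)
Your proof follows the same route as the paper's: specialize Theorem~\ref{t:classification} to $I_{(n)} = \{\alpha\}$, note that~(iii) is vacuous, and tally one forced element from~(i), one from~(ii) when $\alpha \equiv b_n \bmod 2$, and one per pair $\{y,\, c-y\}$ from~(iv). The paper's proof simply asserts that ``parts~(i)--(iv) each require distinct elements to lie in $R_{(n)}$'' and reads off the minimal $|R|$; you make the same observation but spell out the disjointness bookkeeping (using $\alpha\le 0$ to place the element of~(i) above every pair element, and the strict inequality in~(iv) to keep $c/2$ out of every pair) and then explicitly exhibit an $R$ attaining the bound. These additions are correct and fill in exactly the details the paper leaves implicit, so the two arguments are essentially identical in substance.
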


\begin{proof}
Fix $\alpha \leq 0$, let $I = \{\alpha\}$, and suppose $R$ is a removing set that is minimal among all removing sets compatible with $I$.  We will apply Theorem~\ref{t:classification}, noting that for fixed $\alpha$, parts~\eqref{t:classification:form1}-\eqref{t:classification:form4} each require distinct elements to lie in $R_{(n)}$.  Theorem~\ref{t:classification}\eqref{t:classification:form1} requires 1 element to lie in $R$, and Theorem~\ref{t:classification}\eqref{t:classification:form4}, which forces 
$\lfloor (b_n - \alpha - 1)/2 \rfloor$
additional elements to lie in $R$.  Since $|I| = 1$, Theorem~\ref{t:classification}\eqref{t:classification:form3} is vacuously satisfied.  This leaves Theorem~\ref{t:classification}\eqref{t:classification:form2}, which only requires an additional element to lie in $R$ if $\alpha \equiv b_n \bmod 2$.  This completes the proof.  
\end{proof}

\begin{lemma}\label{l:nofreeinserting}
If $A \subset \ZZ_{\ge 1}$ corresponds to an $RI$-pair $(R,I)$ that is compatible for $n$, then
\begin{align*}
|A| \leq d_n + 1 - r(m),
\end{align*}
where $m = \min I_{(n)}$.  
\end{lemma}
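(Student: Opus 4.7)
My plan is to reformulate the conclusion algebraically. Since $A = (X_n \setminus R) \cup I$ and $I$ is disjoint from $X_n$ (because $I_{(n)} \subseteq \{-\ffloor{n/3},\ldots,0\}$ while $(X_n)_{(n)} = \{1,\ldots,d_n\}$), we have $|A| = d_n - |R| + |I|$. Therefore the claim $|A| \leq d_n + 1 - r(m)$ is equivalent to
$$|R| \geq r(m) + |I| - 1.$$
I will prove this by exhibiting $r(m) + |I| - 1$ distinct elements forced into $R_{(n)}$ by Theorem~\ref{t:classification} applied at $\alpha = m$.

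First, parts~(i), (ii), and (iv) of Theorem~\ref{t:classification} applied to $\alpha = m$ reproduce exactly the counting argument from the proof of Lemma~\ref{l:removalformula}, forcing $r(m)$ elements of $R_{(n)}$: the element $b_n - 2m$ from~(i); the element $(b_n - m)/2$ from~(ii) if $m \equiv b_n \bmod 2$; and at least one representative from each of the $\lfloor(b_n - m - 1)/2\rfloor$ pairs $\{y, b_n - m - y\}$ with $1 \leq y < (b_n - m)/2$ from~(iv). All of these elements lie in $\{1, 2, \ldots, b_n - m - 1\}$ except for $b_n - 2m$, which lies strictly above this range since $m \leq 0$.

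Next, part~(iii) applied to $\alpha = m$ forces $b_n - m - \beta \in R_{(n)}$ for every $\beta \in I_{(n)} \setminus \{m\}$. These $|I| - 1$ values are pairwise distinct by injectivity, and I claim each is disjoint from the $r(m)$ elements above. Indeed, since $\beta \leq 0$ we have $b_n - m - \beta \geq b_n - m$, which exceeds every element of $\{1, \ldots, b_n - m - 1\}$; and the only remaining potential collision $b_n - m - \beta = b_n - 2m$ forces $\beta = m$, contradicting $\beta \neq m$. Combining this with the previous paragraph yields $|R| = |R_{(n)}| \geq r(m) + |I| - 1$, as needed.

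The main obstacle is the disjointness verification in the third step. It is essentially bookkeeping, but it relies critically on the sign constraint $I_{(n)} \subseteq \{-\ffloor{n/3},\ldots,0\}$: without it, $b_n - m - \beta$ could land inside the pair range and destroy the count. One should also note that the edge case $m = 0$ forces $|I| = 1$ (since $I_{(n)} \subseteq \{-\ffloor{n/3},\ldots,0\}$ with minimum $0$), in which case part~(iii) is vacuous and the bound collapses harmlessly to $|R| \geq r(m)$.
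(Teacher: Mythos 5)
Your proof is correct and follows the same route as the paper's: apply Theorem~\ref{t:classification} at $\alpha = m$, count the $r(m)$ elements forced into $R_{(n)}$ by parts~(i),~(ii),~(iv) exactly as in the proof of Lemma~\ref{l:removalformula}, count the additional $|I| - 1$ elements forced by part~(iii), and conclude $|R| \geq r(m) + |I| - 1$, which rearranges to the claim via $|A| = d_n - |R| + |I|$. Your explicit disjointness verification — that the part-(iii) elements $b_n - m - \beta$ lie in the interval $[\,b_n - m,\ b_n - 2m)$ because $m < \beta \leq 0$, and hence collide neither with the elements from~(ii) and~(iv) (all at most $b_n - m - 1$) nor with the element $b_n - 2m$ from~(i) — is a detail the paper asserts implicitly rather than spelling out, so your writeup is a slightly more careful version of the same argument.
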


\begin{proof}
Let $m = \min I_{(n)}$, and apply Theorem~\ref{t:classification} to $\alpha = m$.  Following the proof of Lemma~\ref{l:removalformula}, Theorem~\ref{t:classification}\eqref{t:classification:form1},~\eqref{t:classification:form2}, and~\eqref{t:classification:form4} require $r(m)$ elements to lie in $R$, and Theorem~\ref{t:classification}\eqref{t:classification:form3} requires $R$ to contain an additional $|I| - 1$ elements.  We conclude
\begin{align*}
|A| = d_n + |I| - |R| \leq d_n + |I| - r(m) - |I| + 1 = d_n + 1 - r(m),
\end{align*}
as desired.
\end{proof}

\begin{theorem}\label{t:insertionlowerbound}
Fix $k \in \ZZ_{\ge 0}$, and suppose $A \subset \ZZ_{\ge 1}$ corresponds to an $RI$-pair $(R,I)$ that is compatible for $n$.  If $|A| \geq  d_n - k$, then
\begin{align*}
&I_{(n)} \subset \{p_n(k), p_n(k) + 1, \dots, -1, 0\}.
\end{align*}
\end{theorem}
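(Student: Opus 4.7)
My plan is to prove the contrapositive: assume some element $\alpha \in I_{(n)}$ violates the claimed containment, i.e.\ $\alpha < p_n(k) = b_n - 2k - 1$, and deduce $|A| \leq d_n - k - 1$. The whole argument is essentially a one-line computation feeding Lemma~\ref{l:removalformula} into Lemma~\ref{l:nofreeinserting}; the work lies in checking that the ceiling arithmetic matches the definition of $p_n(k)$ exactly.

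Concretely, I would let $m = \min I_{(n)}$, so $m \le \alpha \le b_n - 2k - 2$. Applying Lemma~\ref{l:removalformula} to $m$ gives
\begin{align*}
r(m) = 1 + \left\lceil \tfrac{b_n - m - 1}{2} \right\rceil \;\ge\; 1 + \left\lceil \tfrac{2k+1}{2} \right\rceil \;=\; k+2,
\end{align*}
since $b_n - m - 1 \ge 2k+1$ under the assumed lower bound on $\alpha$. Then Lemma~\ref{l:nofreeinserting} yields
\begin{align*}
|A| \;\le\; d_n + 1 - r(m) \;\le\; d_n + 1 - (k+2) \;=\; d_n - k - 1,
\end{align*}
contradicting the hypothesis $|A| \ge d_n - k$. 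Hence every $\alpha \in I_{(n)}$ satisfies $\alpha \ge p_n(k)$; combined with the fact that $I_{(n)} \subseteq \{-\lfloor n/3\rfloor, \ldots, -1, 0\}$ from the definition of an inserting set, this gives the desired containment $I_{(n)} \subset \{p_n(k), p_n(k)+1, \ldots, -1, 0\}$.

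There is no real obstacle here: the statement was engineered precisely so that the ``removal cost'' $r(m)$ of the smallest offset element, as computed in Lemma~\ref{l:removalformula}, crosses the threshold $k+2$ exactly when $m$ dips below $p_n(k)$. The only thing worth being careful about is the parity case in Lemma~\ref{l:removalformula}\,---\,when $\alpha \equiv b_n \bmod 2$ the ceiling behaves as expected, and when $\alpha \not\equiv b_n \bmod 2$ one still gets $\lceil (b_n-m-1)/2\rceil \ge k+1$ from $b_n - m - 1 \ge 2k+1$\,---\,so the inequality goes through uniformly regardless of the parity of $m$. Thus no case split is needed, and the proof is essentially the two displays above.
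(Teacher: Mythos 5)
Your proof is correct and takes essentially the same approach as the paper: both feed Lemma~\ref{l:removalformula} into Lemma~\ref{l:nofreeinserting} with $m = \min I_{(n)}$, and your contrapositive chain $b_n - m - 1 \ge 2k+1 \Rightarrow r(m) \ge k+2 \Rightarrow |A| \le d_n - k - 1$ is just the paper's direct inequality $k \ge r(m) - 1 \ge (b_n - m - 1)/2 \Rightarrow m \ge p_n(k)$ read in reverse.
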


\begin{proof}
Let $m = \min I_{(n)}$.  By Lemma~\ref{l:nofreeinserting}, we have
\begin{align*}
d_n - k \leq |A| \leq d_n + 1 - r(m),
\end{align*}
meaning $k \geq r(m) - 1$.  Applying Lemma~\ref{l:removalformula}, we obtain
\begin{align*}
k \ge 1 + \left\lceil \frac{b_n - m - 1}{2} \right\rceil - 1 \ge \frac{b_n - m - 1}{2} 
\end{align*}
which can then be rearranged to yield $m \geq b_n - 2k - 1 = p_n(k)$.
\end{proof}

\begin{theorem}\label{t:quasilowerbound}
If $n > 24k + 12 - 8b_n$, then every set $A$ with $|A| = d_n - k$ that works for $n$ is strongly $n$-bounded.
\end{theorem}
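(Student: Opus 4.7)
The plan is to reduce the theorem to Theorem~\ref{t:insertionlowerbound} and then verify that the threshold $n > 24k + 12 - 8b_n$ is exactly what is needed for the lower bound on $\min I_{(n)}$ supplied by that theorem to force every inserted element to exceed $n/4$.

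Let $A$ be any set of size $d_n - k$ that works for $n$, and let $(R, I)$ be its corresponding $RI$-pair, so that $A = (X_n \setminus R) \cup I$. To apply Theorem~\ref{t:insertionlowerbound} I first need $(R, I)$ to be compatible for $n$, i.e.\ $b_n \notin 3A_{(n)}$. This is immediate: since $A$ works for $n$ we have $n \notin \langle A \rangle \supseteq 3A$, and translating by $-3\ffloor{n/3} = -(n - b_n)$ converts $n \notin 3A$ into $b_n \notin 3A_{(n)}$.

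Having secured compatibility, Theorem~\ref{t:insertionlowerbound} gives $I_{(n)} \subseteq \{p_n(k), p_n(k)+1, \ldots, -1, 0\}$ with $p_n(k) = b_n - 2k - 1$; equivalently, every $x \in I$ satisfies
\[
x \ge \ffloor{n/3} + b_n - 2k - 1 = \frac{n - b_n}{3} + b_n - 2k - 1.
\]
A short arithmetic manipulation (multiplying through by $12$) shows this right-hand side is strictly greater than $n/4$ precisely when $n > 24k + 12 - 8b_n$. Combined with the observation that $X_n \subset (n/3, n/2) \subset (n/4, n/2)$, every element of $A = (X_n \setminus R) \cup I$ then exceeds $n/4$, while condition~(ii) of Definition~\ref{d:works} ensures $A \subset [1, n/2)$. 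Thus $A \subset (n/4, n/2)$, so $A$ is strongly $n$-bounded.

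No serious obstacle is expected: the substantive content has already been packaged into Theorem~\ref{t:insertionlowerbound}, and what remains is to translate the offset lower bound $\min I_{(n)} \ge b_n - 2k - 1$ into the concrete numerical threshold stated in the theorem. The only point worth double-checking is that the inequality is strict, matching the open interval in Definition~\ref{d:stronglynbounded}; this is ensured by the strict hypothesis $n > 24k + 12 - 8b_n$.
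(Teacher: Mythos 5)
Your proof is correct and follows essentially the same route as the paper: apply Theorem~\ref{t:insertionlowerbound} to bound $\min I_{(n)}$ from below by $p_n(k)$, then check via elementary arithmetic that the hypothesis $n > 24k + 12 - 8b_n$ is exactly what makes $\ffloor{n/3} + p_n(k) > n/4$. You are a bit more explicit than the paper at two small points (verifying that a working set yields a compatible $RI$-pair before invoking Theorem~\ref{t:insertionlowerbound}, and separately noting $X_n \subset (n/4,n/2)$ rather than folding both cases into a single $\min A$ inequality), but these are stylistic, not substantive, differences.
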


\begin{proof}
Fix a set $A$ with $|A| = d_n - k$ that works for $n$.  Theorem~\ref{t:insertionlowerbound} implies 
\begin{align*}
\min A - p_n(k)
\geq \ffloor{\frac{n}{3}}
= \frac{n - b_n}{3}
= \frac{\tfrac{1}{4}n - b_n}{3} + \frac{n}{4}
> \frac{6k + 3 - 3b_n}{3} + \frac{n}{4}
= 2k + 1 - b_n + \frac{n}{4}
\end{align*}
meaning $A$ is strongly $n$-bounded.  
\end{proof}

\section{Proof of Theorem~\ref{t:mainthm}}
\label{sec:mainresults}

We now have enough machinery to prove Theorem~\ref{t:mainthm} and Corollary~\ref{c:mainthm}.

\begin{proof}[Proof of Theorem~\ref{t:mainthm}]
Fix $n, m \in \ZZ$ satisfying $n \equiv m \bmod 3$ and $n \ge m > 24k + 12 - 8b_n$.  Let $\mathcal S_k$ denote the set of $k$-subsets of $\{d_m + 1, d_m + 2, \dots, d_n\}$.  We will prove the claim combinatorially by constructing a bijection between
\begin{align*}
\mathcal A := \{A : \text{$A$ works for $n$ and $|A| = d_n - k$}\}
\end{align*}
and
\begin{align*}
\mathcal B := \bigcup_{l = 0}^k \big( \{A : \text{$A$ works for $m$ and $|A| = d_m - l$}\} \times \mathcal S_{k-l} \big).
\end{align*}
Fix $A \in \mathcal A$, and let $(R,I)$ denote the corresponding $RI$-pair.  Write $R = R_1 \cup R_2$ with
\begin{align*}
&(R_1)_{(n)} = \{\alpha \in R_{(n)} : 1 \leq \alpha \leq d_m\} \text{ and} \\
&(R_2)_{(n)} = \{\alpha \in R_{(n)} : d_m + 1 \leq \alpha \leq d_n\},
\end{align*}
and define $f:\mathcal A \to \mathcal B$ by
\begin{align*}
f(A) = (\varphi_m(R_1, I), (R_2)_{(n)}).
\end{align*}

We first show $f$ is well-defined.  Let $l = |R_1| - |I|$.  It is clear that $(R_2)_{(n)} \in \mathcal S_{k-l}$, and $(R_1,I)$ is an $RI$-pair for~$m$, so it remains to show that $(R_1, I)$ is compatible for $m$.  By~Theorem~\ref{t:quasilowerbound}, $A$~is strongly $n$-bounded, so Theorem~\ref{t:insertionlowerbound} implies $\min I_{(n)} \ge p_n(k) = p_m(k)$.  The key observation is that the criteria in Theorem~\ref{t:classification}\eqref{t:classification:form1}-\eqref{t:classification:form4} only involve $I_{(n)}$ and $R_{(n)}$, so tracing through each part, the fact that $(R,I)$ is compatible for $n$ implies $(R_1, I)$ is compatible for $m$.  Hence, $f$ is well-defined.  

To prove $f$ is a bijection, we observe that basic set-theoretic arguments verify the map
\begin{align*}
((R_1, I), R_2) \mapsto (R_1 \cup R_2, I),
\end{align*}
is the inverse function of $f$, thereby completing the proof.  
\end{proof}

\begin{proof}[Proof of Corollary \ref{c:mainthm}]
Fix $k \ge 0$ and $b \in \{0, 1, 2\}$, and let
$$m = \min \{x > 24k + 12 - 8b : x \equiv b \bmod 3\}.$$
For any $n \ge m$ satisfying $n \equiv b \bmod 3$, we obtain the expression 
\begin{align}\label{eq:mainthmquasi}
h_{n, d_n - k} = h_{m, d_m - k} \binom{d_n - d_m}{0} + h_{m, d_m - k + 1} \binom{d_n - d_m}{1} + \cdots + h_{m, d_m} \binom{d_n - d_m}{k}
\end{align}
from Theorem~\ref{t:mainthm}, wherein each binomial coefficient is a polynomial in $d_n$ of degree at most $k$.  Since $d_n$ is a quasilinear function of $n$ with period 6, we conclude $h_{n, d_n - k}$ is a quasipolynomial in $n$ of degree $k$ and period 6.  

It remains to verify the leading coefficient of $h_{n, d_n-k}$ has the desired form.  The highest degree term in \eqref{eq:mainthmquasi} is
\begin{align*}
h_{m, d_m} \binom{d_n - d_m}{k} &= h_{m, d_m} \frac{(d_n - d_m) \cdot (d_n - d_m - 1) \cdots (d_n - d_m - k)}{k!}.
\end{align*}
Combined with the fact that $d_n$ has constant leading coefficient $1/6$, we obtain the leading coefficient $h_{m, d_m}/k! 6^k$, and the claim now follows from examination of Figure~\ref{f:quasipolynomials}.  
\end{proof}

\begin{example}\label{e:mainthm}
For fixed $k \ge 1$, the proof of Corollary~\ref{c:mainthm} provides a slightly optimized method of computing the eventual quasipolynomial form of $h_{n, d_n-k}$.  For example, let $k = 3$ and $b = 0$.  In this case, $m = 87$, and consulting \seqnum{A319608} we see
\begin{align*}
h_{87, d_{87} - 0} = 2, && h_{87, d_{87} - 1} = 31, && h_{87, d_{87} - 2} = 228, && \text{and} && h_{87, d_{87} - 3} = 1055.
\end{align*}
From here, we obtain
\begin{align*}
h_{n, d_n - 2} &= 2 \cdot \binom{d_n - d_{87}}{3} + 31 \cdot \binom{d_n - d_{87}}{2} \\ &\ \ \ + 228 \cdot \binom{d_n - d_{87}}{1} + 1055 \cdot \binom{d_n - d_{87}}{0}
\end{align*}
for all $n \equiv 0 \bmod 3$ such that $n \geq 87$.  Expanding binomial coefficients yields
\begin{align*}
h_{n, d_n - 3} = \frac{1}{6}(2d_n^3 + 3d_n^2 + 19d_n - 12),
\end{align*}
and substituting
\begin{align*}
d_n = \ffloor{\frac{n-1}{2}} - \ffloor{\frac{n}{3}} = 
\begin{cases}
\frac{1}{6}n - 1, & \text{if $n \equiv 0 \bmod 6$;} \\
\frac{1}{6}n - \frac{1}{2}, & \text{if $n \equiv 3 \bmod 6$.}
\end{cases}
\end{align*}
into the expression for $h_{n, d_n - 3}$, we arrive at
\begin{align*}
h_{n, d_n - 3} =
\begin{cases}
\frac{1}{648}(n^3 - 9n^2 + 342n - 3240), & \text{if $n \equiv 0 \bmod 6, n \ge 87$;} \\
\frac{1}{648}(n^3 + 315n - 2268), & \text{if $n \equiv 3 \bmod 6, n \ge 90$.}
\end{cases}
\end{align*}
Repeating this process for $b = 1, 2$ yields the function given in Figure~\ref{f:quasipolynomials}.  
\end{example}

\begin{remark}
It is interesting to note that the eventual quasipolynomial form of $h_{n, d_n - 3}$ would not be impossible to compute using the ``standard'' method of finding polynomial coefficients.  Indeed, the values of $h_{n,i}$ have only been successfully computed for $n \le 90$, and since the quasipolynomial behavior of $h_{n, d_n - 3}$ only holds for $n \geq 87$, the standard methods of finding the coefficients of a cubic require knowning $h_{87, d_{87} - 3}, h_{90, d_{90} - 3}, \ldots$, most of which have yet to be computed.  The above method, on the other hand, only relies on $h_{87,d_{87}-i}$ for $0 \le i \le 3$.  
\end{remark}

\begin{algorithm}\label{a:hnkalgorithm}
Theory developed in Section~\ref{sec:stronglyboundedsets} yields an algorithm to compute $h_{m, d_m - k}$ for $m > 24k + 12 - 8b_m$.  In particular, for each possible inserting set $I \subset \{p_m(k), \ldots, -1, 0\}$ for~$m$, Theorem~\ref{t:classification} determines precisely which removal sets $R$ are compatible with $I$.  Example~\ref{e:findingremovingset} demonstrates the main idea of the algorithm.  

The authors used a C++ implementation, now posted on Github at the following URL, to compute the quasipolynomial functions in Corollary~\ref{c:mainthm} up to $k = 7$, the last of which took 6 hours to complete.  
\begin{center}
\url{https://github.com/calvinleng97/rnsg-qp-coeffs}
\end{center}
\end{algorithm}

\begin{example}\label{e:findingremovingset}
Suppose $n = 60$, and consider the insertion set $I = \{17, 18\}$.  Theorem~\ref{t:classification} provides a systematic method of constructing all removal sets $R$ that are compatible with $I$.  Since $\min I > n / 4$, the resulting set will be strongly $n$-bounded. This ensures the resulting sets $A$ corresponding to $(R,I)$ will work for $n$.  

We check every item of Theorem \ref{t:classification} with every element $\alpha \in I_{(n)}$ to construct $R_{(n)}$.  We first compute the offset form
\begin{align*}
I_{(n)} = \{-3, -2\}
\end{align*}
and initialize $R_{(n)} = \emptyset$. Note that $b_n = 0$ since $60 \equiv 0 \bmod 3$.  We begin by applying Theorem~\ref{t:classification}(\ref{t:classification:form1})-\eqref{t:classification:form3} to each $\alpha \in I_{(n)}$, since Theorem~\ref{t:classification}(\ref{t:classification:form4}) requires additional decisions.  For $\alpha = -2$, we see that $1, 4, 5 \in R_{(n)}$, and for $\alpha = -3$, we must have $R_{(n)} = \{1, 4, 5, 6\}$.  Lastly, we deal with Theorem \ref{t:classification}(\ref{t:classification:form4}), which is vacuously satisfied for $\alpha = -2$, and for $\alpha = -3$ implies either $1 \in R_{(n)}$ or $2 \in R_{(n)}$, the first of which is already required from above.  As such, $R_{(n)} = \{1,4,5,6\}$ yields a removal $R = \{1,4,5,6\}$ that is compatible with $I$.  Moreover, any removal set $R' \supset R$ is also compatible with $I$.  
\end{example}

\section{Future Work}
\label{sec:futurework}

Although the quasipolynomials in Corollary~\ref{c:mainthm} only hold for $n$ sufficiently large, the machinery developed in Section~\ref{sec:stronglyboundedsets} describes the sets counted by $h_{n,d_n-k}$ and the relations between them as $n$ varies.  For~$n$ just below the start of quasipolynomial behavior, computations indicate the sets counted by $h_{n,d_n-k}$ are simply those predicted by Theorem~\ref{t:classification} that still mimimally generate a numerical semigroup.  A better understanding of this phenomenon could allow Algorithm~\ref{a:hnkalgorithm} to be extended to all $n \ge 1$, rather than just sufficiently large $n$.  

\begin{problem}\label{prob:backtracking}
Characterize the sets counted by $h_{n,d_n-k}$ for all $n$ in terms of those counted by $h_{n,d_n-k}$ for $n$ sufficiently large.  
\end{problem}

Algorithm~\ref{a:hnkalgorithm} has the potential to be parallelized (with different threads handling different insertion sets), but the current implementation does not take advantage of this fact.  Doing so would likely extend the current limits of computation, which would be especially useful if Problem~\ref{prob:backtracking} has a positive answer.  

\begin{problem}\label{prob:parallel}
Write a parallelized implementation of Algorithm \ref{a:hnkalgorithm}.
\end{problem}

\bigskip
\hrule
\bigskip

\noindent 2010 {\it Mathematics Subject Classification}:\
Primary 20M14; Secondary 05E40.

\noindent \emph{Keywords}:\
numerical semigroup, quasipolynomial.

\bigskip
\hrule
\bigskip

\noindent (Concerned with sequences 
\seqnum{A158206}, 
\seqnum{A319608})

\bigskip
\hrule
\bigskip

\end{document}